\theoremstyle{plain}
\newtheorem{thm}{Theorem}[section]
\newtheorem{prop}[thm]{Proposition}
\theoremstyle{definition}
\newtheorem{question}{Question}
\newtheorem{rem}[thm]{Remark}
\newtheorem{example}{Example}
\def\QQ{{\mathbb{Q}}}
\def\CC{{\mathbb{C}}}
\def\ZZ{{\mathbb{Z}}}
\DeclareMathOperator{\im}{Im}
\DeclareMathOperator{\Hom}{{Hom}}
\DeclareMathOperator{\End}{{End}}
\def\A{\mathcal{A}}
\providecommand{\dim}{\mathop{\rm dim}\nolimits}
\providecommand{\triv}{\mathop{\rm triv}\nolimits}
\begin{document}

\title{Completely decomposable Jacobian varieties in new genera} 

\author[J. Paulhus]{Jennifer Paulhus}
\address{%
Department of Mathematics and Statistics\\
Grinnell College\\
Grinnell, IA 50112\\
United States}

\email{paulhus@math.grinnell.edu }%

\author[A. M. Rojas]{Anita M. Rojas}
\address{%
Departamento de Matem\'aticas\\
Facultad de Ciencias\\
Universidad de Chile\\
Las Palmeras 3425\\
7800024 Nunoa\\
Santiago, Chile}

\email{anirojas@uchile.cl }%
\thanks{Partially supported by Fondecyt Grant 1140507}

\subjclass[2010]{14H40; 14K12; 14Q15; 14H37}%
\keywords{group algebra decomposition, completely decomposable, Jacobian varieties.}%

\begin{abstract}
We present a new technique to study Jacobian variety decompositions using
subgroups of the automorphism group of the curve and the corresponding  intermediate covers. In
particular, this new method  allows us to produce many new examples of
genera for which there is a curve with completely decomposable Jacobian.
These examples greatly extend the list  given by Ekedahl and Serre of genera containing such curves, and provide more evidence for   a positive answer to two questions they asked.  Additionally, we produce  new examples of families of curves, all of which have completely decomposable Jacobian varieties.  These families relate to questions about special subvarieties in the moduli space of principally polarized abelian varieties.
\end{abstract}

\maketitle

\section{Introduction}

A principally polarized abelian variety   over $\CC$ is called
{\it completely decomposable} if it is isogenous to a product of elliptic curves.  Ekedahl and Serre \cite{ekserre} ask the following two questions. 

\begin{question} Is it true that, for all whole numbers $g$, there exists a curve of genus $g$ whose Jacobian is completely decomposable?
\end{question}

\begin{question} Are the genera of curves with completely decomposable Jacobians bounded?
\end{question}

They demonstrate various curves up to genus $1297$ with completely
decomposable Jacobian varieties.  However, there are numerous genera in that range for which they do not produce an example of a curve with this property.  

Since their paper, there has been much interest  in  curves with completely
decomposable Jacobian varieties, particularly the applications of such
curves to number theory. Dimension two has been widely studied; for
example, in \cite{earle}  a full classification of Riemann matrices of {\it
  strictly} completely decomposable Jacobian varieties of dimension $2$ is
given (these are  Jacobians which are {\it isomorphic} to a product of
elliptic curves). In \cite{kaniE}, the case of completely decomposable
abelian surfaces is studied, and several other authors have also studied  these questions.  See  \cite{crr},  \cite{msv}, \cite{nakajima}, and \cite{yama}, among many others.

Additionally, in \cite[Question 6.6]{mo}
the authors ask about positive dimensional special subvarieties, $Z$, of the
closure  of the Jacobian locus  in the
moduli space of principally polarized abelian varieties 
such that the abelian variety corresponding with the geometric generic
point of $Z$ is isogenous to a product of elliptic curves.  In Section \ref{S:families}, we discuss examples of positive dimensional families of curves with completely decomposable Jacobians, and connections to this question.

Despite advancements in the field, the questions of \cite{ekserre} still
remain open. Since the
publication of Ekedahl and Serre's list of genera 20 years ago, there have been few new
examples of genera with a compact Riemann surface of that genus with a
completely decomposable Jacobian. In
\cite{yama}, the author
gives a list of integer numbers $N$ such that the Jacobian variety $J_0(N)$
of the modular curve $X_0(N)$ has elliptic curves as $\mathbb{Q}-$simple
factors. These examples include  three genera not previously noted in \cite{ekserre} for which there is a completely
decomposable Jacobian variety: these are genus $113, 161$, and $205$
(corresponding to $N=672,1152$, and $1200$, respectively). His techniques are
number theoretic and relate to \cite[Section 2]{ekserre}.

 In this paper, we use experimental tools to find many examples of completely decomposable
Jacobian varieties in new genera.  We do this by using a new approach involving known results on {\it intermediate coverings}, i.e.,
quotients by the action of subgroups of the full group acting on the variety.   \\

We summarize the main results of this work in the following theorem. The bold numbers indicate genera which are new in this paper.\ \\

\noindent {\bf Theorem.} 
For every $g \in \{$1--29, {\bf 30}, 31, {\bf 32}, 33, {\bf 34--36}, 37, {\bf 39}, 40, 41, {\bf 42}, 43, {\bf 44}, 45, {\bf 46}, 47, {\bf 48}, 49, 50, {\bf 51--52}, 53, {\bf 54}, 55, 57, {\bf 58}, 61, {\bf 62--64}, 65, {\bf 67}, {\bf 69}, {\bf 71--72}, 73, {\bf 79--81}, 82,
{\bf 85}, {\bf 89}, {\bf 91}, {\bf 93}, {\bf 95}, 97, {\bf 103}, {\bf 105--107}, 109, {\bf 118}, 121, {\bf 125}, 129, {\bf 142}, 145,  {\bf 154}, 161, 163,
{\bf 193}, {\bf 199}, {\bf 211}, {\bf 213}, 217, {\bf 244}, 257, 325, 433$\}$  there is a curve of genus $g$ with completely
decomposable Jacobian variety which may be found using a group acting on a curve. Moreover in some cases there is a family of
such curves of dimension greater than $0$.  \\

Even for the already known genera, our examples are all derived from group actions, whereas many of the examples found in \cite{ekserre} use the theory of modular curves.  Genus 3 to 10, except genus $8$ are in \cite{paulhus}. Genus $8$ may be
found with a curve of automorphism group of size $336$  where the Jacobian
is isogenous to 
$E^8$ for some elliptic curve $E$.   \\

The pervious theorem, and the approach we outline in Section \ref{S:intcoverings}, open up the possibility that Question 1 might have a positive answer, and that group actions might be the tool to solve it. As we will see, once there is a completely decomposable Jacobian variety of a certain genus, by considering subgroups, it is possible to produce new examples  in lower genus.  Moreover, this theorem also suggests that perhaps there is no bound for the genus of curves with completely decomposable Jacobian variety, it might be a matter of simply finding the right group action.  \\

We describe the techniques used to decompose Jacobians in sections
\ref{S:gpactions} and \ref{S:intcoverings}.  In Section \ref{S:results}
we give explicit examples in both new  and old genera. Our new examples may
be found in Theorem \ref{T:purple} and Theorem \ref{T:redgreen}. Those
genera exhibiting a family of curves of dimension greater than $0$ with completely decomposable Jacobians
are given in Theorem \ref{T:family}. The computations needed to find both the old and new
examples were made using Magma \cite{magma}.  Finally, we address computational limitations of our techniques in Section \ref{S:complications}. The many examples from the paper may be useful to researchers interested in open questions surrounding groups acting on curves with completely decomposable Jacobians.

\section*{Acknowledgments} 
The second author is very grateful to Grinnell College, where the final version of this paper was written, for its hospitality and the kindness of all its people.

\section{Techniques} \label{S:technique}

Consider a  compact Riemann surface $X$ (referred to from now on as a
\lq\lq curve\rq\rq) of genus $g$ with a finite group $G$ acting on that
curve.  We write the quotient curve $X/G$ as $X_G$ and the genus of the quotient as $g_0$. Let the cover $X \to X_G$ be ramified at  $r$ places, $q_1, \ldots, q_r$. The signature of the cover is an $(r+1)$-tuple $[g_0;s_1, s_2,\ldots, s_r]$ where the $s_i$ are the ramification indices of the covering at the branch points. We denote the Jacobian variety of $X$ by $JX$.

\subsection{The group algebra decomposition.}\label{S:gpactions}
For many  examples, we use the group action of the automorphism group $G$ of $X$ to decompose $JX$.  We briefly describe the technique here for a general abelian variety $A$.  More details may be found in the original article \cite{lr} or \cite[Chp. 13]{lb}.

Let $A$ be an abelian variety of dimension $g$ with a faithful action of a finite group $G$. There is an induced homomorphism of $\QQ$-algebras
$$
 \rho: \QQ[G] \to \End_{\QQ}(A).
$$ Any element $\alpha \in \QQ[G]$ defines an abelian subvariety
$$
A^{\alpha} := \im (m\rho(\alpha)) \subset A
$$
where $m$ is some positive integer such that $m\rho(\alpha) \in \End(A)$. This definition does not depend on the chosen integer $m$.

Begin with the decomposition of $\QQ[G]$ as a product of simple
$\QQ$-algebras $Q_i$
$$
\QQ[G] = Q_1 \times \cdots \times Q_r.
$$  The factors $Q_i$ correspond canonically to the rational irreducible
representations $W_i$ of the group $G$, because each one is generated by a
unit element $e_i \in Q_i$ which may be considered as a central idempotent
of $\QQ[G]$.

The corresponding decomposition of $1 \in \QQ[G]$,
$$
1 = e_1 + \cdots + e_r
$$
induces an isogeny, via $\rho$ above, 
\begin{equation} \label{eq2.1}
A^{e_1} \times \cdots \times A^{e_r} \to A
\end{equation}
which is given by addition. Note that the components $A^{e_i}$ are
$G$-stable complex subtori of $A$ with $\Hom_G(A^{e_i},A^{e_j}) =0$ for $i
\neq j$. The decomposition \eqref{eq2.1} is called the {\it isotypical decomposition} of the complex $G$-abelian variety $A$.

The isotypical components $A^{e_i}$ can be decomposed further, using the
decomposition of $Q_i$ into a product of minimal left ideals. If $W_i$ is
the irreducible rational representation of $G$ corresponding to $e_i$ for
every $i = 1, \dots, r$, and  $\chi_i$ is the character
of one of the irreducible $\CC$-representations associated to $W_i$, then set

$$
n_i = \frac{\deg \chi_i}{m_i}  
$$ where $m_i$ denotes the Schur index of $\chi_i$. There is a set of primitive idempotents $\{\pi_{i1}, \cdots,\pi_{in_i}\}$
in $Q_i \subset \QQ[G]$ such that
$$
e_i = \pi_{i1} + \cdots + \pi_{in_i}.
$$ Moreover, the abelian subvarieties $A^{\pi_{ij}}$ are mutually isogenous
for fixed $i$ and $j= 1, \dots, n_i$. Call any one of these isogenous
factors $B_i$.  Then (see \cite{cr})
$$
B_i^{n_i} \to A^{e_i}
$$
is an isogeny for every $i = 1, \dots, r$. Replacing the factors in \eqref{eq2.1} we get an isogeny called the {\it  group algebra decomposition} of the $G$-abelian variety $A$

\begin{equation} \label{eq2.4}
B_{1}^{n_1} \times \cdots \times B_{r}^{n_r} \to A.
\end{equation}

Note that, whereas \eqref{eq2.1} is uniquely determined, \eqref{eq2.4} is not.
It depends on the choice of the $\pi_{ij}$ as well as the choice of the
$B_i$.  However, the dimension of the factors will remain fixed.

\begin{rem}\label{R:dims} While the factors in \eqref{eq2.4} are not necessarily easy to
determine, we may compute their dimension in the case of a Jacobian variety $JX$ with the action of a group
$G$ induced by the action on the corresponding Riemann surface $X$ (see \cite{paulhus} for
details). Define
$V$ to be the representation of $G$ on $H_1(X,\ZZ) \otimes_{\ZZ} \QQ$.  As
mentioned at the beginning of this section, here we assume the quotient
$X_G$ has genus $g_0$ and the cover $\pi:X\to
X_G$ has $r$ branch points $\{ q_1 \ldots, q_r\}$ where each $q_i$ has
corresponding monodromy $g_i$.  The tuple $(g_1, \ldots, g_r)$ is called the generating vector for the action \cite{brou}.

Then the
character $\chi_V$ associated to $V$ is  
\begin{equation}\label{eq:chi}  \chi_V=2\chi_{\triv}+2\left(g_{0}-1\right)\rho_{\langle
  1_{G}\rangle}+\sum\limits_{i=1}^{r}\left(\rho_{\langle
    1_{G}\rangle}-\rho_{\langle g_{i}\rangle}\right)\end{equation}
\cite[Equation 2.14]{brou}, where $\chi_{\triv}$ is the trivial character on $G$, $\rho_H$ is the induced character on $G$ of the trivial character of
the  subgroup $H$ (when $H=\langle g_i \rangle$, this subgroup is the stabilizer, or isotropy
group, of a point
in the fiber of the branch point $q_i$), and $\rho_{\langle 1_{G} \rangle}$
is the character of the regular representation. Then

\begin{equation}\label{eq:dims}
\dim B_i=\frac{1}{2}\dim_\QQ \pi_{i,j} V=\frac{1}{2} \langle \psi_{i},\chi_{V} \rangle
\end{equation} 
where here $\psi_{i}$  is the character of the $\QQ$-irreducible
representation of $G$ corresponding to $B_i$, and $\chi_V$ is the character defined in \eqref{eq:chi}.  
\end{rem}

One way we find completely decomposable Jacobian varieties is to search for
curves so that the decomposition in \eqref{eq2.4}  gives factors $B_i$ of
dimension only $0$ or $1$, computed via \eqref{eq:dims}.

\subsection{Intermediate Covering Decomposition}\label{S:intcoverings}

While the technique in the previous section gives us a number of examples
of completely decomposable Jacobians in new genera (see Theorem \ref{T:purple}),
we can extended the technique by studying decompositions of intermediate
coverings of a higher genus curve with a known decomposition of its
corresponding Jacobian variety. This idea expands the range of genera with
completely decomposable Jacobians which can be found using group
actions. We find many more new genera, as listed in Theorem \ref{T:redgreen}.  

To describe the technique, we begin with the  following proposition.  

\begin{prop} \cite[Proposition 5.2]{cr}  Given a Galois cover $X \to X_G$,
  consider the group algebra decomposition \eqref{eq2.4} 
$$JX \sim B_{1}^{\frac{\deg \chi_1}{m_1}} \times \cdots \times B_{r}^{\frac{\deg \chi_r}{m_r}}.$$
If $H$ is a subgroup of $G$ and $\pi: X \to X_H$ is the corresponding
quotient map, then the group algebra decomposition of $JX_H$ is given as
\begin{equation}\label{eqdimJH} JX _H\sim B_{1}^{\frac{\dim V_1^H}{m_1}} \times \cdots \times
B_{r}^{\frac{\dim V_r^H}{m_r}}\end{equation} where $V_j$ is a complex irreducible
representation associated to $B_j$, and  $V_j^H$ is the subspace of $V_j$ fixed by $H$. \end{prop}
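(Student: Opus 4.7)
The plan is to identify $JX_H$ with a specific abelian subvariety of $JX$ and then decompose that subvariety using the group algebra of $G$. The crucial observation is that the pullback $\pi^*\colon JX_H \to JX$ is an isogeny onto its image, and that image agrees (up to isogeny) with $(JX)^{p_H}$, where $p_H = \frac{1}{|H|}\sum_{h \in H} h \in \QQ[G]$ is the averaging idempotent for $H$. This is standard: the rational Hodge structure of $JX_H$ is the $H$-invariant part of that of $JX$, and $\pi^*$ induces this inclusion. So it suffices to compute $(JX)^{p_H}$ in terms of the group algebra decomposition \eqref{eq2.4}.

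With this identification in hand, I would multiply $p_H$ by the central decomposition $1 = e_1 + \cdots + e_r$ to obtain orthogonal idempotents $p_H e_i$, giving an isogeny
$$JX_H \sim (JX)^{p_H} \sim \prod_{i=1}^r (JX)^{p_H e_i}.$$
Each $p_H e_i$ lives in the simple Wedderburn component $Q_i \cong M_{n_i}(D_i)$ of $\QQ[G]$, so it is conjugate inside $Q_i$ to a diagonal sum of some number $k_i$ of the primitive idempotents $\pi_{i,j}$. Since $A^{\pi_{i,j}} \sim B_i$ for every $j$, this yields $(JX)^{p_H e_i} \sim B_i^{k_i}$. What remains is to compute the integer $k_i$.

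For this step I would compare dimensions via $\dim A^{\alpha} = \tfrac{1}{2}\dim_\QQ(\alpha V)$, applied with $V = H_1(X,\ZZ) \otimes \QQ$. On the one hand $\dim (JX)^{p_H e_i} = \tfrac{1}{2}\dim_\QQ (e_iV)^H$, since $p_H$ projects onto $H$-invariants; on the other hand $\dim B_i^{k_i} = k_i \dim B_i$, which one reads off from $\dim A^{e_i} = n_i \dim B_i$. Writing $e_iV \cong W_i^{\oplus a_i}$ for the $\QQ[G]$-multiplicity $a_i$, and using $W_i \otimes \CC \cong \bigoplus_{\sigma}(V_i^\sigma)^{\oplus m_i}$ as $\sigma$ runs over the Galois orbit of $\chi_i$, the factors $a_i$ and $[\QQ(\chi_i):\QQ]$ cancel in the ratio, leaving exactly $k_i = \dim V_i^H / m_i$.

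The step I expect to be most delicate is this last one: one must carefully track the interaction between the Schur index $m_i$, the character field $\QQ(\chi_i)$, and the Galois action on complex irreducibles, and verify that $\dim (V_i^\sigma)^H = \dim V_i^H$ for every Galois conjugate $\sigma$. That last equality holds because this dimension equals $\langle 1_H, \chi_i|_H \rangle$, a non-negative integer and hence Galois invariant. Once this bookkeeping is in place, the stated exponent $\dim V_i^H / m_i$ drops out of a clean dimension count, and combining across $i$ gives \eqref{eqdimJH}.
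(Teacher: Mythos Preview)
Your argument is correct and follows the standard route to this result: identify $JX_H$ with the image of the averaging idempotent $p_H = \frac{1}{|H|}\sum_{h\in H} h$ acting on $JX$, split $p_H$ along the central idempotents $e_i$, recognise each $p_H e_i$ inside the simple component $Q_i \cong M_{n_i}(D_i)$ as conjugate to a diagonal sum of $k_i$ primitive idempotents (hence $(JX)^{p_H e_i}\sim B_i^{k_i}$), and then determine $k_i$ by a dimension count. Your handling of the Schur index and the Galois-invariance of $\dim V_i^H = \langle 1_H,\chi_i|_H\rangle$ is accurate, and the cancellation yielding $k_i = \dim V_i^H/m_i$ is clean.

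That said, there is nothing in the paper to compare against: the proposition is not proved here but is quoted from \cite[Proposition~5.2]{cr}. The only addition the paper makes is the Frobenius reciprocity observation $\dim V_j^H = \langle V_j,\rho_H\rangle$ immediately afterward, which it uses to derive the next proposition. Your write-up is effectively a self-contained reconstruction of the cited result, and it is sound; one minor point worth making explicit is that when $\dim B_i = 0$ the ratio defining $k_i$ is vacuous, but then $(JX)^{e_i}=0$ and the corresponding factor is trivial anyway.
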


By  Frobenius Reciprocity, we know that
\[ \dim {V_j}^H=\langle V_j, \rho_H\rangle \]
where $\langle V_j, \rho_H\rangle$ is the inner product of the characters
of these representations. Suppose $X$ is a curve with a known Jacobian
decomposition as in \eqref{eq2.4}, not necessarily completely
decomposable.  Then apply the previous proposition to get a
decomposition of $JX_H$ as in \eqref{eqdimJH} where
$JX_H$ will be completely decomposable precisely when $\langle V_j, \rho_H
\rangle=0$ for all $j$ such that $\dim B_j>1$ for the $B_i$ in the decomposition of $JX$. We have thus proven:

\begin{prop}\label{P:conditionintermediatecover} Given the conditions in the previous proposition, assume that 
$\langle V_j, \rho_H\rangle=0 $ for all $j$ such that $\dim B_j>1$, and $\langle V_j, \rho_H\rangle\neq 0 $ for at least one index $j$ with $\dim B_j=1$. Then the Jacobian variety of the curve $X_H$ is completely decomposable.
\end{prop}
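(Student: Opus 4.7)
The plan is to read this statement directly off the cited proposition combined with Frobenius Reciprocity, since it essentially packages those two facts together into a single usable criterion.

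First I would write down the group algebra decomposition of $JX_H$ supplied by the preceding proposition:
$$JX_H \sim B_{1}^{\dim V_1^H/m_1} \times \cdots \times B_{r}^{\dim V_r^H/m_r}.$$
This tells us that the isogeny type of $JX_H$ is controlled by the multiplicities $\dim V_j^H/m_j$, rather than by the multiplicities $\deg\chi_j/m_j$ appearing in $JX$; the pieces $B_j$ themselves are inherited from the decomposition of $JX$ and their dimensions are unchanged.

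Next I would invoke Frobenius Reciprocity to rewrite $\dim V_j^H = \langle V_j, \rho_H \rangle$, where $\rho_H$ is the character of the representation of $G$ induced from the trivial representation of $H$. Under the first hypothesis of the proposition this inner product vanishes for every $j$ with $\dim B_j > 1$, so every such factor appears with multiplicity zero in the decomposition of $JX_H$ and can simply be dropped. The only surviving factors have $\dim B_j \in \{0,1\}$: the zero-dimensional ones are trivial, and the one-dimensional ones are elliptic curves. Hence $JX_H$ is isogenous to a product of elliptic curves, i.e.\ completely decomposable. The second hypothesis, that $\langle V_j,\rho_H\rangle \neq 0$ for at least one $j$ with $\dim B_j = 1$, ensures this product is genuinely non-trivial, so that $X_H$ actually has positive genus and the conclusion is not vacuous.

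Since the statement is a direct corollary of results already in hand, there is no real obstacle in the argument. The only point requiring any care is the identification $\dim V_j^H = \langle V_j,\rho_H\rangle$ as a non-negative integer on the nose; this is the standard content of Frobenius Reciprocity applied to the trivial character of $H$, and the authors have in fact already singled it out in the paragraph just before the statement, so the proof reduces to citing that identity and substituting it into the decomposition above.
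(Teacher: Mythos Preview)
Your argument is correct and matches the paper's own proof essentially line for line: apply the preceding proposition to get the decomposition of $JX_H$, use Frobenius Reciprocity to rewrite $\dim V_j^H$ as $\langle V_j,\rho_H\rangle$, and observe that the vanishing hypothesis eliminates every factor $B_j$ of dimension greater than one. The paper presents this reasoning in the paragraph immediately preceding the proposition and then simply states ``We have thus proven,'' so your write-up is in fact slightly more detailed than the original.
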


Notice that even though a Jacobian variety $JX$ may not be  completely
decomposable, a Jacobian $JX_H$ of some intermediate cover $X_H=X/H$ could
decompose completely.  This gives us a much richer set of curves to search
through to
find completely decomposable Jacobian varieties. There are numerous
examples of  curves in high genus  whose Jacobians
decompose into many elliptic curves, but may not be themselves completely
decomposable. By applying Proposition \ref{P:conditionintermediatecover}, quotients of these curves may then be completely
decomposable. \\

Let us demonstrate with a couple of examples. More details and several other examples may be found in Section \ref{S:redgreenex}. First, a note on our notation for the rest of the paper. In most instances,  we will write a specific group as an ordered pair, where the
first number is the order of the group and the second number is its number
in the Magma or GAP database of groups of small order.  Also, for consistency in labeling subgroups and conjugacy classes, we convert all groups to permutation groups first.  For the Jacobian decompositions, when we write $E^{n} \times  E^m$ we are assuming that $E^n$ corresponds to one factor $B_i^{n_i}$ from \eqref{eq2.4} and $E^m$ corresponds to a different factor $B_j^{n_j}$ in \eqref{eq2.4}.  It is possible that in some cases these elliptic curves are isogenous.   \\

\begin{example} A complete search of genus $12$ curves as listed in \cite{breuer} using
techniques from Section \ref{S:gpactions} gives no example of a genus $12$
curve with a completely decomposable Jacobian. However, we may find one
as the quotient of a higher genus curve which has a completely decomposable
Jacobian. There is a curve $X$ of genus $29$ with the action of $G=(672,1254)$. From Section \ref{S:gpactions}, the Jacobian of $X$ decomposes completely as
\[JX \sim E^6 \times E^7\times E^8\times E^8,\]
each factor corresponding to a different complex (and rational) irreducible
representation. The group $G$ has several non-normal subgroup $H$ of order $2$, and one is such that
the dimensions of the fixed spaces for the corresponding representations
from the decomposition are all $3$.  Therefore the Jacobian of the
intermediate curve $X_H$ (a genus $12$ curve) decomposes as the same four elliptic curves each one to the power of $3$. That is,
\[J(X_H) \sim E^3 \times E^3\times E^3\times E^3.\]
Note that Ekedahl and Serre also find a genus $12$ example as a quotient of the modular curve $X_0(198)$ of genus $29$ by an involution. However, the group in  our
example is too  large to be the  automorphism group of this modular curve. \end{example}

\begin{example} Using this technique on one of our new examples from Section \ref{S:gpactions},  we
  can generate another example. Consider $G=(720,767)$ acting on a
  curve $X$ of genus $61$ with signature $[0;2,6,6]$. It has a subgroup $H$ of order $2$ such that
  $X_H$ has genus $30$ and a completely decomposable Jacobian. Note that a genus $30$  example is not one found by Ekedahl and Serre. \end{example}

\begin{example} Finally consider an example of a higher genus curve which is not completely decomposable, but an intermediate cover produces a lower genus curve which is completely decomposable. There is a genus $101$ curve with automorphism group $G=(800,980)$ whose Jacobian decomposes as
$$JX \sim E \times A_2 \times E^2 \times \underbrace{E^8 \times \cdots \times E^8}_{12}$$
where $A_2$ is an abelian variety of dimension $2$.  This group has three subgroups which produce quotients of genus $51$.  One of those three subgroups produces a decomposition as in \eqref{eq2.4} where the factor above of dimension $2$ has dimension $0$, and thus we get the following complete decomposition:
$$JX_H \sim E \times E^2 \times \underbrace{E^4 \times \cdots \times E^4}_{12}.$$ \end{example}

\section{Results}\label{S:results}

In this section we apply the techniques from Sections \ref{S:gpactions} and
\ref{S:intcoverings} to find completely decomposable Jacobian varieties
(including  all the genera found by  Ekedahl and Serre
\cite{ekserre} and Yamauchi \cite{yama} except for $g=113$, $205$, $649$, and $1297$).

Our primary task is to find examples where the dimensions in \eqref{eq2.4}
or \eqref{eqdimJH} above are all $0$
or $1$.  To construct our examples, we must know the automorphism group and signature of curves in high genus.  We use three  data sources for this information. Breuer
\cite{breuer} provides complete lists of
automorphism groups and signatures for curves of a given genus up to genus $48$. We use his data up through genus $20$. For genus 21--101, we
use data  computed by Conder \cite{conder},  giving  all 
automorphism group of size  greater than $4(g-1)$ for a given genus $g$ (this size condition guarantees, in particular,
 that $g_0$ is $0$).

 Finally, for genus greater than $101$, we use the ideas
described in \cite{condergpactions} to find possible automorphism
groups corresponding to  a few targeted  signatures (particularly those signatures which gave us
lower genus examples as in Theorem \ref{T:purple}).  Given a signature
$[0;s_1, \ldots, s_r]$, we use the Magma
command \verb+LowIndexNormalSubgroup(K,n)+  to find all possible
low index normal subgroups of the group $$K=\langle x_1, \ldots, x_r |
x_1^{s_1}= \cdots = x_r^{s_r} = x_1\cdots x_r=1 \rangle.$$ These normal subgroups  give us
possible automorphism groups for that signature up to a particular genus
which depends on the choice of $n$ (see \cite[page 260]{fk} ). We will see that these large genus curves give us many new examples. \\

Notice that the computation of $\chi_V$ in \eqref{eq:chi} requires
knowledge of a generating vector of the action. Modifications to \cite{breuer}  give us a way to
compute generating vectors if the automorphism group and signature are already known.  See
\cite{branching} for details.   

For each of these three data sets and a fixed group $G$ and signature, we first compute the Jacobian
decomposition as in \eqref{eq2.4} and, if this is completely decomposable,
we record it.  Next we compute all orders of subgroups of $G$ and if any of
those orders produce a quotient of still unknown genus, we apply the technique
of Section \ref{S:intcoverings}  to see if this subgroup produces a
completely decomposable intermediate cover.  Note that, from \eqref{eqdimJH},  if we take a completely
decomposable Jacobian of higher genus, the Jacobian
variety corresponding to any intermediate quotient by any subgroup will automatically be
completely decomposable.

In our computations, as we increased the genus, we removed from consideration all lower genera we had already
found an example for.  So our examples for Section \ref{S:intcoverings} are
just a sample of such curves for a given genus and may not represent all
curves of that genus which have decomposable Jacobians realizable through
group actions.   We chose as our goal demonstrating the
usefulness of our technique, and
not performing an exhaustive search of all decomposable Jacobians for any
known genus.

We divide the results into three sections:  those found through the
technique in \S 2.1, those found through the technique in \S 2.2, and those
which give a  family of dimension greater than $0$ of completely decomposable Jacobians of
a given genus.

\subsection{Group algebra decomposition examples}

The new genera, those not included in Ekedahl and Serre's paper,  found
using the technique in Section \ref{S:gpactions} are given here.

\begin{thm}\label{T:purple} 
Let $g\in \{36, 46, 81, 85, 91, 193, 244 \}.$ There is a completely
decomposable Jacobian variety of dimension $g$. Moreover, each one
corresponds to the Jacobian variety of a curve of genus $g$ with the action
of a group $G$ as listed in Table \ref{Tb:purple}. The signature for the
action and the decomposition are also listed in the table.
\end{thm}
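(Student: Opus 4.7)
The proof is essentially a table-driven verification: for each of the seven genera, we need to exhibit the curve, perform the group algebra decomposition, and check that every factor $B_i$ has dimension at most $1$. The plan is to carry this out uniformly and then remark on the individual entries of Table~\ref{Tb:purple}.

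First, for each pair $(G, [0;s_1,\ldots,s_r])$ listed in the table, I would verify existence of a curve $X$ of genus $g$ with automorphism group containing $G$ acting with the prescribed signature. For $g\le 101$ this is a database lookup in Breuer's or Conder's tables as described in Section~\ref{S:results}; for $g=193$ and $g=244$, it amounts to running the \verb+LowIndexNormalSubgroup+ computation on the triangle-style group $K=\langle x_1,\ldots,x_r\mid x_i^{s_i}=x_1\cdots x_r=1\rangle$ to produce $G$ as a quotient, and then using the Riemann--Hurwitz formula
\[
2g-2 = |G|\!\left(2g_0-2+\sum_{i=1}^r\Bigl(1-\tfrac{1}{s_i}\Bigr)\right)
\]
to confirm the genus. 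Generating vectors $(g_1,\ldots,g_r)$ realising the signature are then produced using the modified Breuer algorithm of \cite{branching}, certifying that the cover is actually realised by a curve.

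Next, having a generating vector, I would compute the character $\chi_V$ of the rational representation on $H_1(X,\ZZ)\otimes\QQ$ using \eqref{eq:chi}. For each rational irreducible character $\psi_i$ of $G$ (with associated complex character $\chi_i$ of Schur index $m_i$), formula \eqref{eq:dims} yields
\[
\dim B_i = \tfrac{1}{2}\langle \psi_i,\chi_V\rangle,
\]
and the multiplicity of $B_i$ in the group algebra decomposition is $n_i=\deg\chi_i/m_i$. The verification consists in checking, for each row of Table~\ref{Tb:purple}, that every $\dim B_i$ produced this way is either $0$ or $1$, and that $\sum n_i\dim B_i = g$. Then \eqref{eq2.4} gives
\[
JX \sim \prod_{\dim B_i = 1} B_i^{\,n_i},
\]
which is isogenous to a product of elliptic curves, proving complete decomposability.

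The main obstacle is computational rather than conceptual. For the smaller genera in the list ($g=36, 46, 81, 85, 91$), Breuer's and Conder's tables hand us $G$ directly, and the character computation in Magma is routine. The delicate cases are $g=193$ and $g=244$: here one must choose the signature cleverly (guided, as the text indicates, by signatures that worked in lower genus) and push \verb+LowIndexNormalSubgroup+ deep enough to uncover the relevant $G$, while controlling the explosion of candidate groups. Once $G$ and a generating vector are in hand, the character-theoretic verification is a bounded calculation; all seven cases are then assembled into Table~\ref{Tb:purple} as the proof of the theorem.
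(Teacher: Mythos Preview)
Your proposal is correct and follows essentially the same approach as the paper: a case-by-case computational verification using the group algebra decomposition of Section~\ref{S:gpactions}, with the groups and signatures drawn from the Breuer/Conder databases for $g\le 101$ and from \verb+LowIndexNormalSubgroup+ for $g=193,244$, followed by the character computation \eqref{eq:chi}--\eqref{eq:dims} to check that every $B_i$ has dimension $0$ or $1$. The paper's proof simply spells out the resulting decompositions and the relevant characters for each of the seven genera, and additionally notes for $g=91$ that the example is a one-dimensional family rather than a single curve.
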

\begin{proof} 
The proof consists of following the program outlined in Section \ref{S:gpactions}. For this we need to find appropriate group actions on the missing genera.

In genus $36$ there are two curves, up to topological equivalence with automorphism group PGL$(2,7)$ and signature   $[0;2,6,8].$   
It is possible to classify  actions  topologically by using the action of the Braid group on a generating vector for the action. We do not describe these details here, but references are
\cite{brou}, \cite{harvey} and \cite{vbook}. A review of the principal results on this matter and a program in Sage \cite{sage} which computes the non-equivalent actions, can be found in \cite{camila}, \cite{bmr}. Both curves have a decomposition of the form
\[JX \sim E^6 \times E^7 \times E^7 \times E^8 \times  E^8. \]

The irreducible $\CC$-characters of this group are all irreducible
$\QQ$-characters, except for two of degree $6$. The decomposition then follows from \eqref{eq2.4} and  \eqref{eq:dims}. The factors in the Jacobian
decomposition come from an irreducible $\CC$-character of degree
$6$, and the  two irreducible $\QQ$-characters in each of degrees $7$ and
$8$. 

In genus $46$ there is one curve, up to topological equivalence, with automorphism group $(324,69)$ and signature $[0;2,6,18]$. This curve has a decomposition
\[JX \sim E \times E \times E^2 \times  E^6 \times E^6 \times E^6 \times E^6 \times E^6 \times E^6 \times E^6 .\]
 In this case, 
the factors in this decomposition come from two separate sets of two degree $1$ irreducible $\CC$-characters whose sums are irreducible $\QQ$-characters, one set of two degree $2$ irreducible $\CC$-characters whose sum is also an irreducible $\QQ$-character, and six of the irreducible $\CC$-characters of degree $6$ which are all irreducible $\QQ$-characters.

In genus $81$ a curve $X$ with automorphism group  $(1152,157853)$  and signature $[0;2,4,9]$  has Jacobian  decomposition
\[JX \sim E^9 \times E^9 \times E^9 \times  E^9 \times E^9 \times E^9 \times E^9 \times E^9 \times E^{9}.\] For genus $85$ there is a curve $X$ with automorphism group of size $2016$ given as \cite{conder}  {\tiny $$\langle x,y,z | x^2  z^{-1} \cdot y^{-1} \cdot x, y^4, z^6, y^{-1} \cdot z \cdot y \cdot
x \cdot z^2 \cdot y \cdot x \cdot y^{-1} \cdot z \cdot y^{-1} \cdot z^{-2}
\cdot x \cdot z,  y\cdot z^{-1} \cdot y \cdot z^{-1} \cdot y \cdot
z^{-1}\cdot x \cdot y^2 \cdot z^{-1} \cdot y \cdot z^{-1} \cdot y \cdot
z^{-2} \rangle$$} and with signature $[0;2,4,6]$ which has Jacobian decomposition 
\[E^6 \times E^{7} \times E^{8} \times  E^{8} \times E^{12} \times E^{14} \times E^{14} \times E^{16}.\]

In the genus $81$ case, the factors in this decomposition come from nine separate  degree $9$ irreducible $\CC$-characters  which are all irreducible $\QQ$-characters. In the genus $85$ case,  the factors in the decomposition come from irreducible $\CC$-characters one each of degree $6$, $7$, $12$, and $16$, and two each of degree $8$ and $14$.  All of these characters are irreducible $\QQ$-characters.

For genus $91$ there is a one dimensional family of curves with automorphism group $G=(432,686)$ and signature $[0;2,2,2,12]$. All curves in this family are completely decomposable. Using data from \cite{conder} for genus $91$, there is no larger automorphism group which has curves with completely decomposable Jacobians.  In particular, no curve in this family has a larger automorphism group. 

Finally, for genus $193$ there is an curve with automorphism group of size 5706 and signature  $[0;2,3, 10]$  while in  genus $244$,   the size of the group is $11,664$ and the signature is
$[0;2,3,8]$.  Both examples were found using the Magma command \verb+LowIndexNormalSubgroup+ to determine the automorphism groups.  Here is the presentation of the group for genus $193$:
\tiny
$$ \langle x,y,z | x^2, y^3, z^{10}, z^{-1}  y^{-1}  x, x  z^2  y  z^{-1}  x  z  y^{-1}  z^{-2}  x  z  y^{-1}  z^{-2}, y  z^{-1}  x  z^4  y  z^{-1}  x  y^{-1}  z^{-1}  x  y^{-1}  z^{-2}  x  z^4  y  z^{-1}  x  y^{-1}  z^{-1}  x  z,$$ $$z^2  y^{-1}  z^{-4}  x  z  y  z^{-1}  x  z^2  y  x  z  y  x  z^{-1}  x  z  y^{-1}  z^{-3}  x \rangle,$$\normalsize
and here is the presentation for the group of genus $244$:
\tiny
$$\langle x,y,z | x^2, y^3, z^8, z^{-1}  y^{-1}  x, z  y  x  z  y  x  z  y  x  y^{-1}  z^{-1}  x  y^{-1}  z^{-1}  x  y^{-1}  z^{-1}  x, z^2  y  x  z^2  y  x  z^2  y  x  z^2  y  x  y^{-1}  x  y^{-1}  z^{-1}  x  z  y^{-1}  z^{-1}  x \rangle .$$
\normalsize
\end{proof}

In Table \ref{Tb:purple}  we record one example of a curve with completely
decomposable Jacobian  for  each genus found with the technique from section \ref{S:gpactions}. For completeness, we include the genera found by Ekedal and Serre, or Yamauchi. For each genus, we chose an example with the largest automorphism group.  In the table we include the automorphism group as well as
the signature. When possible, we denote the groups as ordered pairs where the first term
is the order of the group, and the second term is the group identity number
from Magma's database. If the order of the group exceeds the allowable sizes for these databases, we have labeled the group as a number (sometimes with a subscript).  The number represents the order of the group.  If the subscript itself is a number, then the group presentation may be found in data of Conder  \cite{conder} where the subscript denotes which of groups of that order (and with the corresponding signature) in his data it is.  If the subscript is a letter (or if there is no subscript at all), the presentation of the group may be found at \cite{webdata}.

The final column of the table represents the decomposition as a list of numbers represent the $n_i$ from \eqref{eq2.4}. Again, we note that it is conceivable that distinct elliptic curve factors in \eqref{eq2.4}  may be isogenous. The  new examples from this
technique are denoted by a *.

All of our examples come from  group actions,
while some of Ekedahl and Serre examples (and the newer work of  Yamauchi \cite{yama}) use
modular curves. We checked that in the genera where examples were obtained with modular curves
in \cite{ekserre}, our corresponding example was not a  modular
curve. To determine this,  we compared the size of the automorphism group of
modular curves of the relevant level,  which can be  determined by
using  \cite[Theorem 0.1]{modular1} and \cite[Proposition 2]{modular2},
with the size of the automorphism groups of our examples.  Only in $g=73$ did
the sizes match, and in that case we explicitly computed the automorphism
group of $X_0(576)$   to determine that it is not the same as our example
in Table \ref{Tb:purple}.   Notice that our genus $26$ example is the well known example of the curve $X(11)$.

\tiny
\begin{center}
\renewcommand{\arraystretch}{1.4}
\noindent \begin{longtable}{c c  l l }
\caption{Curves with completely decomposable Jacobians in genus greater than $10$, using group algebra decomposition. The examples are those we found with the largest automorphism group for that genus. }\label{Tb:purple}  \\
 {Genus} & {Automorphism Group} & {Signature} & {Jacobian Decomposition}\\ \hline
\endfirsthead
\caption{(continued)} \\ 
{ Genus} & { Automorphism Group} & {Signature} & {Jacobian Decomposition}\\ 
  \hline
\endhead
11 & $(240, 189)$ & $[0;2,4,6]$ & $5,  6$ \\
13 & $(360,121)$ & $[0;2,3,10]$ & $5, 8$ \\ 
14 & $(1092,25)$ & $[0;2,3,7]$ & $14$ \\
15 & $(504, 156)$ & $[0; 2,3, 9]$ & $7, 8 $ \\
16 & $(120,34)$  & $[0;3,4,6]$ & $ 5, 5, 6$\\
17 & $(1344, 814)$ & $[0;2,3, 7]$ & $3, {14}$\\ 
19 & $(720,766)$ & $[0;2,4,5]$ & $9, {10} $\\
21 & $(480, 951)$ & $[0;2,4,6]$ & 5, 6, 10 \\
22 & $(504, 160)$ & $[0;2,3,12]$ & $1, 3, {18}$ \\
24 & $(168,42)$ & $[0;3,4,7]$ & $3, 6, 7, 8$ \\ 
25 &  $(576, 1997)$ & $[0;2, 3, 12 ]$ &  $1, 2, 4, 6, 12$ \\  
26 & $(660,13)$ & $[0;2,3,11]$ & $5, 10, 11 $ \\
28 & $(1296, 2889)$ & $[0;2,3,8]$ & $2, 8, {18}$ \\
29 & $(672,1254)$  & $[0;2,4,6]$ &  $ 6, 7, 8, 8$\\
31& $(720, 767)$  & $[0;2,4,6]$ &  $5, 6, 8, 12$\\
33 & $(1536, 408544637)$ & $[0;2,3,8]$ & $2, 3, 12, {16}$ \\
36*  & $(336,208)$ & $[0;2,6,8]$ & $6, 7, 7, 8, 8$ \\
37 & $(1728, 31096)$& $[0;2, 3, 8 ]$ & $2, 3, 8, 24$ \\ 
41 & $(960,5719)$ & $[0;2,4,6]$ & $5, 6, 8, {10}, {12}$ \\
43  & $(672,1254)$ & $[0;2,4,8]$ & $6, 7, 7, 7,  8, 8$ \\
46*  & $(324,69)$ & $[0;2,6,18]$ & $1, 1,  2,   \underbrace{6,  \ldots, 6}_{7}$ \\
49 &  $(1920,240996)$ & $[0; 2, 4, 5]$ & $4, {10}, {15}, {20}$ \\
50 &  $(588,37)$ & $[0; 2, 6, 6]$ & $1, 1, 6, 6, {12}, {12}, {12}$\\
55 &  $(1296,3490)$ & $[0; 2, 4, 6]$ &  $3, {12}, {12}, {12}, {16}$ \\
57 &  $(1344,11289)$ & $[0; 2, 4, 6]$ & $6, 7, 8, 8, {12}, {16}$ \\
61 &  $(1440,4605)$& $[0; 2, 4, 6]$ & $ 2,5, 6, 8, 8, {10}, {10}, {12}$ \\ 
65 & $3072_1$ & $[0;2,3,8]$  & $2,3, {12}, {24}, {24}$ \\
73 &  $(1728,46270)$& $[0; 2, 4, 6]$ & $2,3, 4, 4, 4, 8, 8, {12}, {12}, {16}$  \\ 
81* &  $( 1152,157853)$ & $[0;2,4,9]$ &  $\underbrace{9, \ldots, 9}_{9}$ \\ 
82&$3888_2$& $[0; 2, 3, 8]$ & $2,8, 8, {16}, {24}, {24}$  \\
85&$4032_1$& $[0; 2, 3, 8]$ & $8, {14}, {18},  21, {24}$ \\ 
91& $(432,686)$ & $[0; 2, 2, 2, 12]$ &  $1, 2,2,2,\underbrace{4, \ldots, 4}_{21}$ \\ 
97 &$3840_1$& $[0; 2, 4, 5]$ & $4, {10}, {15}, {20}, {24}, {24}$ \\ 
109 & $2592_A$& $[0; 2, 4, 6]$ & $2,3, {12}, {12}, {12}, {12}, {16}, {16}, {24}$ \\ 
121 &2880& $[0; 2, 4, 6]$ & $3, 5, 6, 8, {12}, {12}, {12}, {15}, {15}, {15}, {18}$   \\ 
129&10752 & $[0; 2, 3, 7]$ & $3, {14}, {14}, 42, 56$ \\ 
145 &6912& $[0; 2, 3, 8]$ & $2,3, 8, {12}, {24}, {24}, {24}, 48$ \\  
163& $2592_C$& $[0; 2, 4, 8]$ & $1, 2,\underbrace{8, \ldots, 8}_{8}, \underbrace{{16}, \ldots, {16}}_{6}$ \\  
193*&5760& $[0; 2, 3, 10]$ & $5, 8, {15}, {15}, {15}, {15}, {30}, {30}, {30}, {30}$ \\ 
244* & 11664  & $[0; 2,3,8]$ &  $2,8, 8, 16, 24, 24, 36, 36, 36, 54$ \\
257 & $12288_A$ & $[0; 2, 3,8]$ &  $2,3, 12,\underbrace{24, \ldots, 24}_{6}, 48, 48$ \\
325 & 15552 & $[0; 2, 3, 8]$ & $ 2,3, 8, 8, 16, \underbrace{24, \ldots, 24}_{6}, 48, 48, 48$ \\
433 & 5184 & $[0; 2, 6, 6]$ &  $ 1, 1, 2,2,3,  4, \underbrace{6, \ldots,   6}_{8},  \underbrace{12,\ldots, 12}_{31}$ \\ 
\end{longtable}
\end{center}
\normalsize

Many more examples were found than appear in the paper.  We provide tables of  all examples we found, not just those of the largest automorphism group order, at \cite{webdata}.  For genus up to $20$, this is a complete list using this technique for all curves with $g_0=0$. For
genus  21 --101, this is a complete list for all curves with automorphism
group larger than $4(g-1)$. For genus beyond $101$ we only give the
curves  found by strategic searching, and there may be other examples for
a given genus.

  \subsection{Intermediate cover examples}\label{S:redgreenex} Using the technique from Section \ref{S:intcoverings}, we obtain the
following  new examples.  Notice that we found many more new genera with this new technique.  

\begin{thm}\label{T:redgreen}
Let $g\in \{$30, 32, 34, 35, 39, 42, 44, 48, 51, 52, 54, 58, 62--64, 67, 69, 71,
72, 79, 80, 89, 93, 95, 103, 105--107, 118, 125, 142, 154, 199,  211,  213$\}$. There is a completely decomposable Jacobian variety of dimension
$g$. Moreover, each one corresponds to the Jacobian variety of a curve
obtained as a quotient by $H\leq G$ of a curve of higher genus  with the action
of a group $G$.
\end{thm}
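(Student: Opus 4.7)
The plan is to prove each target genus $g$ in the list by exhibiting an explicit pair $(G, H)$ where $G$ acts on a curve $X$ of higher genus $g'$ with a known group algebra decomposition \eqref{eq2.4}, and $H \leq G$ is a subgroup such that the quotient $X_H = X/H$ has genus $g$ and satisfies the hypotheses of Proposition \ref{P:conditionintermediatecover}. This reduces the proof to a finite, systematic search guided by the three data sources described in Section \ref{S:results}: Breuer's lists for low genus, Conder's data for automorphism groups of order exceeding $4(g'-1)$ when $21 \leq g' \leq 101$, and the \verb+LowIndexNormalSubgroup+ construction for higher $g'$.

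For each candidate $(G, X)$ from these catalogs, I would first compute the group algebra decomposition \eqref{eq2.4} using the character formula \eqref{eq:chi} together with \eqref{eq:dims} and identify the factors $B_i$ with $\dim B_i > 1$. Then for each conjugacy class representative $H \leq G$, I would compute the inner products $\langle V_j, \rho_H \rangle$, equivalently the dimensions $\dim V_j^H$ via Frobenius reciprocity. The subgroup $H$ qualifies precisely when these inner products vanish on every $j$ with $\dim B_j > 1$ but are nonzero on at least one elliptic factor; the genus of $X_H$ is then determined by Riemann--Hurwitz, or equivalently by summing the resulting dimensions in \eqref{eqdimJH}.

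The genera listed in the theorem are then those which appear as $\dim JX_H$ for some such pair. The examples preceding the theorem statement (genus $12$ from the $(672, 1254)$-action on a genus $29$ curve, genus $30$ from the $(720, 767)$-action on a genus $61$ curve, and genus $51$ from the $(800, 980)$-action on a genus $101$ curve) illustrate exactly this mechanism, and an explicit triple $(G, H, g')$ realizing each entry of the list can be tabulated, with the complete data at \cite{webdata}. Verifying any single entry requires only the character-theoretic calculation above, which can be carried out in Magma.

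The main obstacle is not conceptual but computational: the search space of triples $(G, X, H)$ grows rapidly with $g'$, and classifying subgroups up to conjugacy in the large groups appearing for the highest target genera (e.g. $g = 211, 213$) is expensive. A further subtlety is that Conder's data for $g' > 101$ is incomplete, so targeted searches via \verb+LowIndexNormalSubgroup+ on specific signatures that already yielded completely decomposable examples in Theorem \ref{T:purple} are needed to reach the largest entries. Once a candidate $(G, H)$ passes the test of Proposition \ref{P:conditionintermediatecover}, the decomposition of $JX_H$ is read off directly from \eqref{eqdimJH}, completing the verification for that genus.
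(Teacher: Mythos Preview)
Your proposal is correct and follows essentially the same approach as the paper: the paper's proof exhibits one worked case (the $(1152,5806)$-action on a genus $73$ curve with a subgroup $H$ of order $2$ giving a completely decomposable quotient of genus $35$), states that the remaining genera follow similarly, and records the explicit triples $(G,H,g')$ in Table~\ref{Tb:serregreenred}. Your description of the search procedure, the use of Proposition~\ref{P:conditionintermediatecover} via the inner products $\langle V_j,\rho_H\rangle$, and the reliance on the three data sources matches the paper's method precisely.
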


\begin{proof}
We give an outline of the proof for one case, the rest follow
similarly. Also recall that in Section \ref{S:intcoverings} we gave examples of several other cases.   

 Consider the group $G=(1152,5806)$ acting on a curve $X$ of genus $73$. Then 
$JX$ decomposes into $10$ factors (each one a power of an elliptic curve),
\[JX \sim E \times E^2 \times E^2 \times  E^4 \times E^8 \times E^8 \times E^8 \times E^8 \times E^{16} \times E^{16}. \]
Using the technique described in Section  \ref{S:intcoverings}, there is a non-normal subgroup $H$ of $G$ of order $2$ such that $X_H$ has genus $35$ and has a completely decomposable Jacobian. The decomposition of the Jacobian variety of the genus $35$ curve is as follows, where $E_i$ corresponds to the $i$th term in the decomposition of $JX$ above
\[J(X_H)\sim E_2 \times E_4^2 \times E_5^4 \times E_6^4 \times E_7^4 \times E_8^4 \times E_9^8 \times E_{10}^8.\]\end{proof}

In Table \ref{Tb:serregreenred} we give one example for each genus where we found an example only through intermediate covers.  We use the same convention for labeling groups as in Table \ref{Tb:purple}.  Again, complete lists of data we found are at \cite{webdata}. In this table we also include the genus of the
  intermediate cover, the genus and automorphism group and signature for
  the larger curve, the subgroup size and number for the corresponding subgroup $H$ (labeled as Magma does, and recall our convention of converting all groups to permutation groups), and the decomposition of the quotient
  curve.   For ease of notation, we grouped all factors from \eqref{eq2.4} of the same dimension together, although they may not be in that order, nor correspond to the order of the decomposition of the high genus curve. For instance, if the decomposition were given as $E^2 \times E^4 \times E^2$, we would denote this as $2, 2, 4.$ 

There are some genera (up to $500$) on Ekedahl and
  Serre's list for which the  technique in Section \ref{S:gpactions}
  cannot identify a curve with completely decomposable Jacobian, and which
  do not appear in Table \ref{Tb:purple}.  The set of such genera is $\{12$,
  $18$, $20$, $23$, $27$, $40$, $45$, $47$, $53$, $217\}$.   All these examples may be
  generated using our second technique of intermediate covers from
  Proposition \ref{P:conditionintermediatecover}. We also collect this  data in Table \ref{Tb:serregreenred}. Again, our new examples are denoted with a $*$. 

\tiny
\begin{center}
\noindent \begin{longtable}{c | c c l | l l } 
\caption{Examples of curves with completely decomposable Jacobians in genus greater than $10$ using intermediate coverings.} \label{Tb:serregreenred} \\
& &  Automorphism &  & Subgroups & Jacobian \\
$g$ &  Large $g$  &Group &  Signature &  No., Order & Decomposition \\  \hline
\endfirsthead
\caption{(continued)} \\ 
& &  Automorphism &  & Subgroups & Jacobian\\
$g$ &  Large $g$  &Group &  Signature &  No., Order & Decomposition \\  \hline
\endhead
12&49& $(288,627)$ & $[0;2,2,2,6]$ & 28, 4& $\underbrace{1, \ldots, 1}_{6}, 2,2,2$\\ 
18&73& $(1152,5806)$& $[0;2,4,8]$ &  35, 4 & 1, 1, 2, 2, 2, 2,  4, 4 \\
20&82& $3888_2$& $[0;2,3,8]$ &13, 4& 2, 2, 4, 6, 6 \\  
23&49& $(256,3066)$ & $[0;2,2,2,8]$ &9, 2& $1, 1, 1, \underbrace{2,
  \ldots, 2}_{10}$  \\ 
27 & 55 & $(432,537)$ & $[0;2,2,2,4]$ & 6, 2 & $1, 2, \underbrace{3, \ldots, 3}_8$ \\  
30*&61& $(720,767)$& $[0;2,6,6]$ &5, 2&  2, 2, 3, 3, 4, 5, 5, 6 \\ 
32*&97& $2304_6$ & $[0;2,3,12]$ &10, 3 & 2, 2, 4, 4, 4, 8, 8  \\ 
34*&73& $(432,682)$& $[0;2,2,2,6]$ &5, 2& $1, 1, \underbrace{2,\ldots, 2}_{16}$ \\ 
35*&73& $(1152,5806)$& $[0;2,4,8]$ &10, 2 & $1, 2, 4, 4, 4, 4, 8, 8$ \\ 
39*&81& $(1152,157853)$& $[0;2,4,9]$ &9, 2 & $ \underbrace{4, \ldots, 4}_6, 5, 5, 5$ \\ 
42*&129& $3072_F$& $[0;2,3,12]$ &11, 3 & $2, \underbrace{4, \ldots, 4}_{6}, 8, 8 $ \\ 
44*&91& $(432,686)$ & $[0;2,2,2,12]$ &7, 2&  $1,  1,  \underbrace{2,\cdots, 2}_{ 21 } $ \\ 
45&91& $(432,686)$ & $[0;2,2,2,12]$ &8, 2 & $1,  1, 1,  \underbrace{2,\cdots, 2}_{ 21 } $ \\ 
47&97& $3840_1$ & $[0;2,4,5]$ &5, 2&$2,4, 7, {10}, {12}, {12}$ \\ 
48*&145& $(1728, 13293)$ & $[0;2,6,6]$ &12, 3& $1,  1, \underbrace{2, \ldots, 2}_{7},\underbrace{4, \ldots, 4}_{8}$ \\ 
51*& 101 & $2400_1$ & $[0;3,3,4]$ &3, 2 & $3, {12}, {12}, {12}, {12}$ \\ 
52*&109& $2592_A$ & $[0;2,4,6]$ &7, 2& $1,  1, 5, 5, 6, 6, 8, 8, {12}$ \\ 
53&109& $(1296,2945)$ & $[0;2,6,6]$ &8, 2  & $1, 1, 1, 2, \underbrace{3, \ldots, 3}_{6}, 6, 6, 6, 6, 6$ \\ 
54*&109& $(1296, 3498)$ & $[0;2,4,12]$ &6, 2& $2, 2, 2, \underbrace{4, \ldots, 4}_{8}, 8, 8 $ \\ 
58* & 244 & 11664 & $[0;2,3,8]$ & 14, 4 & 2, 2, 4, 6, 6, 8, 8, 8, 14 \\ 
62* & 257 & $12288_B$ & $[0;2,3,8]$ & 35, 4 & 2, 4, 4, 6, 6, 8, 8, 12,12 \\ 
63* & 193 & 5760 & $[0;2,3,10]$ &9, 3& $1, 2, 5, 5, 5, 5, {10}, {10}, {10}, {10}$  \\ 
64* & 325 & 3888 & $[0;2,6,6]$ & 28, 4 & $1, \underbrace{3, \ldots, 3}_{ 21}$ \\ 
67*&145& $(1728, 32233)$ & $[0;2,6,6]$ &6, 2 & $1,  1, 2, 3, 3, 3, \underbrace{6,\ldots, 6}_{9}  $ \\  
69*&145& $(1728, 32233)$ & $[0;2,6,6]$ & 9, 2 & $1, 1, 1, 2, 2, 2, 3, 3, \underbrace{6, \ldots, 6}_{9}$ \\ 
71*&145& $(1728, 13293)$ & $[0;2,6,6]$ &8, 2 & $1,  1, \underbrace{3, \ldots, 3}_7,  \underbrace{6, \ldots, 6}_8 $ \\ 
72* & 325&  15552  & $[0;2,3,8]$ & 22, 4 & $1, 1, 4,\underbrace{5, \ldots, 5}_{6}, 12,12,12$  \\ 
79*&163& $2592_D$& $[0;2,4,8]$ &6, 2& $1, 3, 3, 4, 4, \underbrace{8, \ldots, 8}_{8}$ \\ 
80*&163& $2592_C$& $[0;2,4,8]$ &5, 2& $\underbrace{4, \ldots, 4}_{8}, \underbrace{8, \ldots, 8}_6$  \\ 
89*&193& 5760 & $[0;2,3,10]$ & 3, 2& $4, 5, 5, 5,  {10},  {15}, {15}, {15}, {15}$ \\ 
93*&193& 2304 & $[0;2,2,2,3]$ & 11, 2& $1, 1, 1, 1, 2, 3, 3, 3, \underbrace{4, \ldots, 4}_{12}, 6, 8, 8, 8$ \\ 
95*&193 & 2304 & $[0;2,2,2,3]$ &12, 2 & $1, 1, 1, 1, 1, 3, 3, 3, 3,  \underbrace{4,\ldots, 4}_{12},  6, 8, 8, 8$ \\ 
103* & 433 & 5184 & $[0;2,6,6]$ & 32, 4 & $1, 1, 1, 1,  \underbrace{2,\ldots,
  2}_{27}, 3, 3, 3, 3, 3, 6, 6, 6 ,6, 6$   \\ 
105* & 433 & 5184 & $[0;2,6,6]$  & 49, 4 & $\underbrace{1, \ldots, 1}_{8},  \underbrace{2, \ldots, 2}_{18},  3, \underbrace{4, \ldots, 4}_{13}, 6 $ \\
 106* & 325  & 15552 & $[0;2,3,8]$ & 14, 3 & $1, 2, 2, 5, \underbrace{8, \ldots,  8}_6,  16, 16, 16$  \\
107* & 433 & 5184 & $[0;2,6,6]$ & 39, 4 &  $\underbrace{1, \ldots, 1}_{10},  \underbrace{2, \ldots, 2}_{18},  3, \underbrace{4, \ldots, 4}_{13}, 6$  \\
118* & 244 & 11664 & $[0;2,3,8]$ & 3, 2 & 1, 3, 3, 8, 11, 11, 18, 18, 18, 27 \\ \
125* & 257 & $12288_B$ & $[0;2,3,8]$ & 6, 2  & 1, 4, 8, 8, 12, 12, 16, 16, 24, 24  \\ 
142* & 433 & 5184 & $[0;2,6,6]$ & 17, 3  & $1, 1, \underbrace{2,\ldots, 2}_8, \underbrace{4, \ldots, 4}_{31}$  \\ 
154* & 325&  15552 & $[0;2,3,8]$ & 4, 2 & $1, 1, 3, 3, 8, \underbrace{11, \ldots, 11}_6, 24, 24, 24$ \\ 
161 & 325 & 15552 & $[0;2,3,8]$ & 5, 2 & $1, 4, 4, 8, \underbrace{{12}, \ldots, 12}_6, 24, 24, 24$\\ 
199* &  433 & 5184 & $[0;2,6,6]$ & 5, 2 & $1, 1, 2, 3, 3, 3, \underbrace{6, \ldots, 6}_{31}$   \\ 
 211* & 433 & 5184 & $[0;2,6,6]$ & 7, 2 &  $1,  1,  2, \underbrace{3,  \ldots, 3}_7, \underbrace{6, \ldots, 6}_{31}$ \\ 
213* & 433 & 5184 & $[0;2,6,6]$ & 9, 2 &  $1,  1,  1,  2,  2, 2,  \underbrace{3, \ldots,  3}_6,  \underbrace{6, \ldots,  6}_{31}$ \\ 
 217* & 433 & 5184 & $[0;2,6,6]$ &  10, 2 & $1, 1, 1, 2, \underbrace{3,
   \ldots, 3}_6, 4, 4, \underbrace{6, \ldots, 6}_{31}$ \\ 
\end{longtable}
\end{center}

\normalsize

\subsection{Examples of Families}\label{S:families}

Recall from the proof of Theorem \ref{T:purple} that the only completely decomposable Jacobian varieties
of dimension $91$ discovered using the group algebra technique  were a one dimensional
family of curves (so using the group algebra technique only, there is no curve with an automorphism group 
corresponding to a dimension $0$ family in genus $91$ having a completely decomposable
Jacobian).  There are several
known examples of families of completely decomposable Jacobians in low genus  (see \cite{paola}, \cite[Section 4]{lro}, \cite{paulhusthesis}), and, as we mentioned in the introduction,   in \cite{mo}  the authors asked for examples of special subvarieties such that the generic point is completely decomposable. Therefore our techniques provide a way of finding examples of families where one can look for examples to answer their question.  

 Here we highlight the genera where we find a one
dimensional (or higher) family of completely decomposable Jacobians of that genus. We elaborate on the question in \cite{mo} after the theorem.

\begin{thm}\label{T:family}
Let $g\in \{$11--19, 21--29,  31, 33--35, 37, 40, 41, 43--47, 49, 52, 53, 55, 57, 61, 65, 57, 69, 73, 82, 91,
93, 95, 97, 109, 129, 145, 193$\}.$ Then there is a  dimension one (or larger) family
of completely decomposable Jacobians of curves of genus $g$ which can be found using the techniques from Sections \ref{S:gpactions}  and \ref{S:intcoverings}. \end{thm}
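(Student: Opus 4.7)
The plan is to prove the theorem by inspection: for each listed genus $g$, exhibit a single example produced by the techniques of Section \ref{S:gpactions} or Section \ref{S:intcoverings} whose associated equisymmetric family already has dimension at least one. The crucial observation is that every quantity appearing in the dimension formulas \eqref{eq:dims} and \eqref{eqdimJH} is computed purely from character-theoretic data of the triple (group $G$, signature $[g_0; s_1,\ldots,s_r]$, generating vector), and in particular is constant as the curve $X$ varies in the equisymmetric stratum. Therefore, if one curve in a given topological class of $G$-actions has completely decomposable Jacobian, so does every curve in that stratum.

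Next I would invoke the standard fact that the equisymmetric locus in $\mathcal{M}_g$ parameterizing curves admitting a $G$-action of a prescribed topological type with signature $[g_0; s_1, \ldots, s_r]$ is irreducible of dimension $3g_0 - 3 + r$. Hence the theorem reduces to producing, for each listed $g$, one example whose signature satisfies $3g_0 - 3 + r \geq 1$, equivalently either $g_0 \geq 1$, or $g_0 = 0$ with $r \geq 4$. Once such an example is in hand, complete decomposability is verified exactly as in the proofs of Theorems \ref{T:purple} and \ref{T:redgreen}, via the direct formula \eqref{eq:dims} or via Proposition \ref{P:conditionintermediatecover} applied to a subgroup $H \leq G$.

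The proof then proceeds by culling the required examples from the data. For instance, the $g=91$ case is already displayed in the proof of Theorem \ref{T:purple}: the action of $G = (432,686)$ with signature $[0;2,2,2,12]$ gives $3\cdot 0 - 3 + 4 = 1$, and \eqref{eq:dims} yields a completely decomposable decomposition for the whole family. Analogous four-or-more-branch-point (or positive-$g_0$) examples must be located for each remaining genus in the list, drawing on the extended tables at \cite{webdata} rather than only the maximal-automorphism-group entries of Tables \ref{Tb:purple} and \ref{Tb:serregreenred}. For the intermediate-cover genera one proceeds identically, noting that the signature of the full $G$-action on $X$ governs the dimension of the family of quotients $X_H$ as well.

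The main obstacle is computational rather than conceptual: most of the highest-automorphism-group entries in Tables \ref{Tb:purple} and \ref{Tb:serregreenred} use triangle-group signatures of the form $[0; s_1, s_2, s_3]$, which are rigid, so the families arise from \emph{smaller} groups whose candidate list is much longer. Concretely, for genera $g \leq 20$ the full Breuer list must be filtered for signatures with $r \geq 4$ or $g_0 \geq 1$; for $21 \leq g \leq 101$ Conder's data is already restricted to $g_0 = 0$, so one needs signatures with at least four periods; and for the handful of larger listed genera ($109, 129, 145, 193$) the targeted \verb+LowIndexNormalSubgroup+ search must be re-run with an explicit preference for non-triangle signatures. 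Apart from the bookkeeping, no new theoretical input beyond Section \ref{S:technique} is required.
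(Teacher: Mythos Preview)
Your approach is essentially the same as the paper's: exhibit, for each listed $g$, one explicit $G$-action with signature $[0;s_1,\ldots,s_r]$ having $r\geq 4$ (hence an $(r-3)$-dimensional family), verify complete decomposability via \eqref{eq:dims} or Proposition~\ref{P:conditionintermediatecover}, and note that the decomposition type is constant along the equisymmetric stratum. The paper's proof does exactly this, pointing to the explicit examples compiled in Tables~\ref{Tb:familypurple} and~\ref{Tb:familyredgreen} (so you need not fall back on \cite{webdata} for most genera); your write-up is in fact more explicit than the paper's about the dimension formula $3g_0-3+r$ and the constancy argument, but the strategy is identical.
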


\begin{proof} Again, we only demonstrate with a couple of examples.  The
  rest follow in the same way via data listed in  Table \ref{Tb:familypurple} for those genera found
  through the technique in Section \ref{S:gpactions},  and  Table \ref{Tb:familyredgreen} for those found through
  the technique in Section \ref{S:intcoverings}.  In these tables, for each
  genus we only give an example of  the largest automorphism
   group we found which leads to a completely decomposable Jacobian (and
   the  highest dimensional family, if that is not the same). Again, for
   genus greater than $20$, we only searched groups of order greater than
   $4(g-1)$, so there may be other examples of higher dimensional families
   with completely decomposable Jacobians.   All other examples we found
   appear in the data at  \cite{webdata}. For completeness, we have
   added all examples for genus 3 through 10 curves in the Appendix, only including  those corresponding to the action of the full automorphism group.  This data includes many previously know examples.   
   
There is a family of curves of genus $73$ with the action of the group $(432,682)$
with signature $[0;2,2,2,6]$, see the data at \cite{webdata}.  Since this curve is completely decomposable, all quotients by   corresponding subgroups $H$ will also be completely decomposable.  In particular, this group has a subgroup of order $2$ which gives a new example for genus $34$.  

There are several different group actions on curves of genus $49$ giving  one dimensional families of completely decomposable Jacobians. For instance,  the group $(256,3066)$ acts with signature $[0;2,2,2,8]$ and has a subgroup of order $2$ which forms a quotient of genus $23$,  and the group $(288,627)$ acting with signature $[0;2,2,2,6]$ has a subgroup of order $4$ which forms a quotient of genus $12$.  \end{proof}

Using notation from earlier in the paper, let  $G$ be a finite group acting on genus $g$ with signature $m=[0;s_1,\dots,s_r]$, and generating vector $\theta=(g_1,\dots, g_r)$. For a fixed pair $(m,\theta)$, by moving the branch points of the covering in $\mathbb{P}^1$ one obtains an $(r-\!3)$-dimensional family of such coverings, and a corresponding family of Jacobians $\mathcal{J}(G,m,\theta)$ of the same dimension. For references see  \cite{paola} or \cite{vbook}. The symplectic group $Sp(2g,\mathbb{Z})$ acts on the Siegel upper half space, $\mathbb{H}_g$, and $\A_g=Sp(2g,\mathbb{Z})\setminus \mathbb{H}_g$ is a complex analytic space which parametrizes principally polarized abelian varieties  of dimension $g$. It corresponds to the analytic point of view of the moduli space of principally polarized abelian varieties over $\mathbb{C}$ of dimension $g$.

Denote by $Z(G,m,\theta)$ the closure of $\mathcal{J}(G,m,\theta)$ in $\A_g$. The action of $G$ on $X$, hence on its Jacobian $JX$, induces a symplectic representation $\rho:G\to Sp(2g,\mathbb{Z})$ of $G$. Let $\mathbb{H}_g^G$ be the set of fixed points of $G$ in $\mathbb{H}_g$ (see \cite{brr} for details). In \cite[Thms. 1.4, 3.9, Lemma 3.8]{paola} there is a nice characterization of when $Z(G,m,\theta)$ is a {\it special subvariety}. Their criterion is as follows, if the dimension of $\mathbb{H}_g^G$ equals the dimension of $\mathcal{J}(G,m,\theta)$, which is $r-3$, then $Z(G,m,\theta)$ is a special subvariety of $\A_g$ that it is contained in the closure $\mathcal{T}_g$ of the Torelli (or Jacobian) locus, and which intersects non-trivially the Torelli locus $\mathcal{T}_g^0$.

Given a pair $(m,\theta)$ for a fixed $G$, using \cite{brr} one can find the dimension of $\mathbb{H}_g^G$, although it is computationally expensive. Additionally, code is provided in \cite{paola} which can compute the dimension for low genus examples as well.   \\

Our Table 3 contains examples of families found using group actions, so we
can apply the criterion of \cite{paola} to determine if they correspond to
special subvarieties. We remark that this criterion is a sufficient one,
therefore these families could correspond to special subvarieties even if
they do not satisfy the criterion. Moreover, in Table 4, we give examples of families of completely decomposable Jacobian varieties arising from intermediate coverings, therefore the criterion of \cite{paola} cannot be directly applied. It is a work in progress to adjust the criterion to this situation.

We show with one example how the families on Table 3 may correspond to special subvarieties. Let $G$ be the alternating group $A_4$, acting on genus $4$ with signature $m=[0;2,3,3,3]$. We have then a one dimensional family $\mathcal{J}$ of Jacobians. Using \cite{brr} we determine that the dimension of $\mathbb{H}_4^G$ is also $1$. Therefore, according to \cite{paola}, the closure $Z$ of $\mathcal{J}$ is a special subvariety of $\A_4$ contained in $\mathcal{T}_4$ and such that $Z\cap \mathcal{T}_4^0\neq \emptyset$.

Using the group algebra decomposition (see the Appendix), we conclude that the elements in $\mathcal{J}$ (hence in $\mathbb{H}_4^G$) decompose as $E\times E_1^3$. Therefore it is an example answering \cite[Question 6.6]{mo}.  Notice that $E$ corresponds to an irreducible representation $\varphi$ of $G$ such that $G/\ker(\varphi)\cong \mathbb{Z}/3\mathbb{Z}$, hence $E$ has the action of the cyclic group of order $3$ and thus it is fixed along the family.  This family is one of the special subvarieties found in \cite[Table 2]{paola}.

\tiny
\begin{center}
\noindent \begin{longtable}{c c  l l  }
\caption{Examples of families of completely decomposable curves found through group
  algebra method. }\label{Tb:familypurple} \\
 $g$ & { Automorphism Group} & {  Signature} & { 
  Jacobian Decomposition}\\  \hline
  \endfirsthead
\caption{(continued)} \\ 
 $g$ & {  Automorphism Group} & {  Signature} & {  
  Jacobian Decomposition}\\   \hline
\endhead
11 & $(24, 14)$ & $[0;2,2,2,2,6]$ & $1, 1, 1, 2, 2, 2, 2$\\ 
 & $(48, 38)$ & $[0;2,2,2,12]$ &  1, 2, 2, 2, 4\\  
 13  & $(144, 183)$ &   $[0;2,2,2,3]$ & 2, 2, 3, 6 \\
& (48, 51) &   $[0;2,2,2,2,2]$ & 1, 1, 1, 1, 1, 2, 2, 2, 2\\ 
15   & $(48, 48)$ & $[0;2,2,4,6]$ & 1, 2, 3, 3, 3, 3 \\ 
16  & $(36, 13)$ & $[0;2,2,2,2,6]$ &  $1, 1, \underbrace{2, \ldots, 2}_{7}$ \\ 
 17  & $(192, 956)$  &  $[0;2,2,2,3]$ & $ 2, 3, 6, 6$ \\ 
& (64, 211) &  $[0;2,2,2,2,2]$ & $1, 1, 1, 1, 1, \underbrace{2, \ldots, 2}_{6}$ \\
19 & (144, 109) &  $[0;2,2,2,4]$ & 1, 3, 3, 6, 6 \\
& (72, 49) & $[0;2,2,2,2,2]$ & $1, 1, 1, \underbrace{2, \ldots, 2}_{8}$ \\
25 & (288,847) & $[0;2,2,2,3]$ &   $2, 2, 3, 4, 6, 8$    \\
 28 & (324,124) & $[0;2,2,2,3]$ &  $2, 2, 2, 4, 6, 6, 6$ \\
    31 & $(144, 154)$   & $[0;2,2,2,12]$ &  $ 1, 2, 2, 2, \underbrace{4, \ldots, 4}_6$\\
33 &  (384,18136) & $[0;2,2,2,3]$   & $3, 3, 3, 8, 8, 8$\\
 37 & (432,748) & $[0;2,2,2,3] $ &  $2, 2, 2, 3, 4, 6, 6, 12$\\
49 & $(576,  8653)$ & $[0;2,2,2,3]$ & $2,2,3, 3, 6, 6, 9, 9, 9    $ \\ 
55 & $(432,537)$ & $[0;2,2,2,4]$ & $1, 3, 3,  \underbrace{6, \ldots, 6}_{8} $ \\ 
61& $(288, 629)$ & $[0;2,2,2,12]$ & $ 1, \underbrace{2, \ldots, 2}_{6},
\underbrace{4, \ldots, 4}_{12}$ \\
65 & $(768, 1090018)$ & $[0;2,2,2,3]$ & $2,3, 3, 3, 6, \underbrace{8, \ldots, 8}_{6}$ \\ 
73& $(576, 4322)$ & $[0;2,2,2,4]$ & $ 1, 2,2,\underbrace{4, \ldots, 4}_{9}, 8, 8,
8, 8  $ \\ 
82& $(972, 474)$ & $[0;2,2,2,3]$ & $2, 2, 2,4,\underbrace{6, \ldots, 6}_{6}, {12}, {12}, {12}  $ \\ 
91& $(432,686)$ & $[0;2,2,2,12]$ &  $1, 2, 2, 2,\underbrace{4, \ldots, 4}_{21}$ \\ 
97& $(1152,157665)$ & $[0;2,2,2,3]$ & $2, 2, 3, 3, 3, 6, 6, 6, 6, 8, 8, 8, 8, {12}, {16}   $ \\ 
109 & $(1296, 2940)$ & $[0;2,2,2,3]$ & $2, 2, 2, 3, 4, \underbrace{6, \ldots, 6}_{8}, {12}, {12}, {12}, {12}  $ \\  
129&1536& $[0;2,2,2,3]$ & $2, 3, 3, 3, 6, 6, 6, 6, 6, 8, 8, \underbrace{{12}, \ldots, {12}}_6$ \\ 
145&  $(1728,46119)$ & $[0;2,2,2,3]$ & $2, 2, 3, 3, 3, \underbrace{6, \ldots, 6}_{8}, \underbrace{12,\ldots, 12}_{7} $ \\ 
193&2304& $[0;2,2,2,3]$ &  $2, 2, 3, 3, 3, 6, 6, 6, 6, \underbrace{8, \ldots, 8}_{12}, {12}, {16}, {16}, {16}$  \\
\end{longtable}
\end{center}

\begin{center}
\noindent \begin{longtable}{c | c c l | l l } 
\caption{Family of completely decomposable curves found through intermediate
cover method.}\label{Tb:familyredgreen} \\
\renewcommand{\arraystretch}{1.4}
& & Automorphism &  & Subgroup & Jacobian\\
$g$ &  Large $g$  &Group &  Signature & Order, No. & Decomposition \\ \hline
  \endfirsthead
\caption{(continued)} \\ 
& & Automorphism &  & Subgroup & Jacobian\\
$g$ &  Large $g$  &Group &  Signature & Order, No. & Decomposition \\ \hline
\endhead
12&49& $(288,627)$ & $[0;2,2,2,6]$ & 29, 4 & 1, 1, 1, 1, 2, 2, 2, 2 \\  
14  &  145 & $(1728, 46119)$ & $[0;2,2,2,3]$ & 168, 8 & $\underbrace{1, \ldots, 1}_{11}, 3$ \\
18  &  145 & $(1728, 46119)$ & $[0;2,2,2,3]$ & 152, 8 & $\underbrace{1, \ldots, 1}_{11}, 2, 2, 3$\\
21  &  129 & 1536 & $[0;2,2,2,3]$ & 128, 6 & $\underbrace{1, \ldots, 1}_{7}, \underbrace{2, \ldots, 2}_7$ \\
22  &  193 & 2304 & $[0;2,2,2,3]$ & 132, 8 & $\underbrace{1, \ldots, 1}_{7}, \underbrace{2, \ldots, 2}_6, 3$\\
23&49& $(256,3066)$ &$[0;2,2,2,8]$ & 9, 2& $1, 1, 1, \underbrace{2, \ldots, 2}_{10}$  \\ 
24  &  49 & $(288, 627)$ & $[0;2,2,2,6]$ & 8, 2 & $\underbrace{1, \ldots, 1}_{8}, \underbrace{2, \ldots, 2}_8$  \\
26  &  109 & $(1296, 2940)$ & $[0;2,2,2,3]$ & 22, 4 & $\underbrace{1, \ldots, 1}_7, 2, 2, 2, 2, 2, 3, 3, 3$ \\
27 & 55 & $(432,537)$ & $[0;2,2,2,4]$ & 6, 2 & $1, 2,  \underbrace{3, \ldots, 3}_8$ \\ 
29  &  97 & $(1152, 157665)$ & $[0;2,2,2,3]$ & 13, 3 & $1, 1, 1, \underbrace{2, \ldots, 2}_8, 4, 6$\\
33  &  193 & 2304 & $[0;2,2,2,3]$ & 74, 6 & $ \underbrace{1, \ldots, 1}_{7}, \underbrace{2, \ldots, 2}_{13}$\\
34&73& $(432,682)$& $[0;2,2,2,6]$ &5, 2& $1, 1, \underbrace{2,\ldots, 2}_{16}$ \\ 
35  &  145 & $(1728, 46119)$ & $[0;2,2,2,3]$ & 54, 4 & $\underbrace{1, \ldots, 1}_{10}, 2, 2, 2, 3,  4, 4, 4, 4$ \\
40  &  82 & $(972, 474)$ & $[0;2,2,2,3]$ & 4, 2 & $1, 1, 2, \underbrace{3, \ldots, 3}_6, 6, 6, 6$\\
41  &  129 & 1536 & $[0;2,2,2,3]$ & 15, 3 & $1, 1, 1, \underbrace{2, \ldots,  2}_7,  \underbrace{4, \ldots, 4}_6$\\
43  &  193 & 2304 & $[0;2,2,2,3]$ & 34, 4 & 1, 1, 1, 1, 2, 2, 2, 3, 4, 4, 4, 4, 6, 8\\
44&91& $(432,686)$ & $[0;2,2,2,12]$ &7, 2&  $1,  1,  \underbrace{2,\cdots, 2}_{ 21 } $ \\ 
45&91& $(432,686)$ & $[0;2,2,2,12]$ &8, 2 & $1,  1, 1,  \underbrace{2,\cdots, 2}_{ 21 } $ \\ 
46  &  109 & $(1296, 2940)$ & $[0;2,2,2,3]$ & 3, 2 & $1, 1, 1, \underbrace{2,\cdots, 2}_{8},  3, 6, 6, 6, 6$\\
47&97& $(1152,157665)$  & $[0;2,2,2,3]$ & 10, 2 & $1, 1, 1, 1, 1, 3, 3, 3, 3,
4, 4, 4, 4, 6, 8$ \\ 
52  &  109 & $(1296, 2940)$ & $[0;2,2,2,3]$ & 5, 2 & $1, 1, 1, 2, 2, \underbrace{3, \ldots, 3}_7, 6, 6, 6, 6$\\
53  &  109 & $(1296, 2940)$ & $[0;2,2,2,3]$ & 6, 2 & $1, 1, 1, 2, \underbrace{3, \ldots, 3}_{8}, 6, 6, 6, 6$\\
57  &  193 & 2304 & $[0;2,2,2,3]$ & 15, 3 & $1, 1, 1, \underbrace{2, \ldots, 2}_{16} 4, 6, 6, 6$\\
67  &  145 & $(1728, 46119)$ & $[0;2,2,2,3]$ & 11, 2 & $1, 1, 1, 1, 1, 2,  2, 2,  2, 3,  3,  3, 3, \underbrace{6, \ldots, 6}_7$\\
69  &  145 & $(1728, 46119)$ & $[0;2,2,2,3]$ & 12, 2 & $1, 1, 1, 1,  2, \underbrace{3, \ldots, 3}_{7},  \underbrace{6, \ldots, 6}_7$\\
93&193&2304 & $[0;2,2,2,3]$ & 11, 2& $1, 1, 1, 1, 2, 3, 3, 3, \underbrace{4, \ldots, 4}_{12}, 6, 8, 8, 8$ \\ 
95&193 &2304 & $[0;2,2,2,3]$ &12, 2 & $1, 1, 1, 1, 1, 3, 3, 3, 3,  \underbrace{4,\ldots, 4}_{12},  6, 8, 8, 8$ \\ 
\end{longtable}
\end{center}
\normalsize

\normalsize

\section{Complications}\label{S:complications}

The techniques described above do not necessarily guarantee the finest  decomposition.  In \eqref{eq2.4}, it is possible that $B_i \sim B_j$ even if $i \neq j$, or that the $B_i$ may decompose further.  There may be examples using a finer decomposition which fill other gaps in  Ekedahl and Serre's list. \\

Computationally, finding automorphism groups and signatures in high genus
is resource heavy. The memory requirements for the
Magma command \verb+LowIndexNormalSubgroups+
limit  our ability to use this command to find other examples in even
higher genus, or to fill remaining gaps using the intermediate cover technique. We are optimistic that, given sufficient computational resources, the
techniques we describe above could produce numerous additional new
examples.  \\

\section{Appendix}

Here we provide all examples of families of curves, for genus 3--10, which have completely decomposable Jacobians.  These were found by searching all Breuer's data for these genera, and then removing those groups that were not the full automorphism group for the given family.   Some of the examples in this table were known before \cite{paola}.

\tiny
\begin{center}
\noindent \begin{longtable}{c c  l l  }
\caption{Family of completely decomposable curves found through group
  algebra method for genus 3-10. }\label{Tb:familygenus3to10} \\
 $g$ & {  Automorphism Group} & { Signature} & { 
  Jacobian Decomposition}\\  \hline
  \endfirsthead
\caption{(continued)} \\ 
 $g$ & {  Auto. Group} & { Signature} & { 
  Jacobian Decomposition}\\   \hline
\endhead
 3    & $(4, 2)$ & $[0;2,2,2,2,2,2]$ & 1, 1, 1 \\
   & $(6, 1)$ & $[0;2,2,2,2,3]$ & 1, 2 \\
  &  $(8, 2)$ & $[0;2,2,4,4]$ & 1, 1, 1 \\
  & $(8, 5)$ & $[0;2,2,2,2,2]$ & 1, 1, 1 \\
  &  $(12,4)$ & $[0;2,2,2,6]$ & 1, 2  \\
  &  $(16, 11)$ & $[0;2,2,2,4]$ & 1, 2  \\
  &  $(16, 13)$ & $[0;2,2,2,4]$ & 1, 2 \\
  &  $(24, 12)$ & $[0;2,2,2,3]$ & 3 \\ \hline
4 &   $(8, 3)$ & $[0;2,2,2,2,4]$ & 1, 1, 2 \\ 
&  $(12,3)$ & $[0,2,3,3,3]$ & 1, 3  \\ 
&  $(12,4)$ & $[0;2,2,3,6]$ & 2, 2 \\ 
&  $(12,4)$ & $[0;2,2,2,2,2]$ & 1, 1, 2 \\ 
&  $(24, 12)$ & $[0;2,2,2,4]$ &1, 3 \\ 
&  $(36, 10)$ & $[0;2,2,2,3]$ & 2, 2 \\  \hline
  5 &  $(8, 5) $&   $[0;2,2,2,2,2,2]$ &  1, 1, 1, 1, 1  \\
&  $(12,4)$& $[0;2,2,2,2,3]$ &1, 2, 2 \\
&  $(16, 3)$& $[0;2,2,4,4]$ &1, 2, 2 \\
&  $(16, 3)$& $[0;2,2,4,4]$  & 1, 1, 1, 2 \\
&  $(16, 11)$& $[0;2,2,2,2,2]$ & 1, 1, 1, 2 \\
&  $(16, 11)$& $[0;2,2,2,2,2]$ &1, 2, 2 \\
&  $(16, 14)$& $[0;2,2,2,2,2]$ & 1, 1, 1, 1, 1 \\
&  $(24, 12)$& $[0;2,2,3,3]$ & 2, 3 \\
&  $(24, 8)$& $[0;2,2,2,6]$ & 1, 2, 2 \\
&  $(24, 14)$& $[0;2,2,2,6]$ &1, 2, 2 \\
&  $(32,27)$& $[0;2,2,2,4]$ &1, 2, 2 \\
&  $(32,28)$& $[0;2,2,2,4]$ & 1, 2, 2 \\
&  $(32,43)$& $[0;2,2,2,4]$ &  1, 4 \\ \hline
6  &  $(12, 4)$ & $[0;2,2,2,2,6]$ & 1, 1, 2, 2 \\
& $(24, 12)$ & $[0;2,2,3,4]$ & 3, 3 \\  \hline
7  & $(8, 5)$ & $[0;2,2,2, 2,2,2,2]$ & 1, 1, 1, 1, 1, 1, 1\\
& $(16, 11)$ & $[0;2,2,2,2,4]$ & 1, 1, 1, 2, 2 \\
 & $(18, 4)$ & $[0;2,2,2,2,3]$ &  1, 2, 2, 2 \\
&  $(24, 13)$ & $[0;2,2,3,6]$ & 1, 3, 3 \\
&  $(24, 14)$ & $[0;2,2,2,2,2]$ & 1, 1, 1, 2, 2 \\
&  $(32, 43)$ & $[0;2,2,2,8]$ & 1, 2, 4 \\
&  $(36, 10)$ & $[0;2,2,2,6]$ & 1, 2, 4 \\
&  $(48, 38)$ & $[0;2,2,2,4]$ & 1, 2, 4 \\
&  $(48, 48)$ & $[0;2,2,2,4]$ & 1, 3, 3 \\ \hline
8 &  $(24, 12)$ & $[0;2,3,3,4]$ & 2, 3, 3 \\ \hline
9 &   $(16, 11)$ & $[0;2,2,2,2,2,2]$ & 1, 1, 1, 1, 1, 2, 2 \\
&  $(16, 14)$ & $[0;2,2,2,2,2,2]$ & 1, 1, 1, 1, 1, 1, 1, 1, 1 \\
&  $(24, 14)$ & $[0;2,2,2,2,3]$ & 1, 2, 2, 2, 2 \\
&  $(32, 6)$ & $[0;2,2,4,4]$ & 1, 2, 2, 4 \\
&  $(32, 27)$ & $[0;2,2,2,2,2]$ &1, 1, 1, 2, 2, 2 \\
&  $(32, 34)$ & $[0;2,2,2,2,2]$ & 1, 2, 2, 2, 2 \\
&  $(32, 43)$ & $[0;2,2,2,2,2]$ &  1, 1, 1, 2, 2 \\
&  $(32, 46)$ & $[0;2,2,2,2,2]$ &  1, 1, 1, 1, 1, 2, 2 \\
&  $(32, 49)$ & $[0;2,2,2,2,2]$ & 1, 1, 1, 1, 1, 4 \\
&  $(48, 38)$ & $[0;2,2,2,6]$ & 1, 2, 2, 4\\
&  $(48, 43)$ & $[0;2,2,2,6]$ & 1, 2, 2, 2, 2  \\
&  $(48, 48)$ & $[0;2,2,2,6]$ &  3, 3, 3  \\
&  $(64, 73)$ & $[0;2,2,2,4]$ &  1, 2, 2, 2, 2 \\
&  $(64, 128)$ & $[0;2,2,2,4]$ &  1, 2, 2, 4 \\
&  $(64, 134)$ & $[0;2,2,2,4]$ &  1, 2, 2, 4\\
&  $(64, 135)$ & $[0;2,2,2,4]$ &  1, 2, 2, 4 \\
&  $(64, 138)$ & $[0;2,2,2,4]$ & 1, 2, 2, 4 \\
&  $(64, 140)$ & $[0;2,2,2,4]$ & 1, 2, 2, 4 \\
&  $(64, 177)$ & $[0;2,2,2,4]$ & 1, 4, 4 \\
&  $(96, 193)$ & $[0;2,2,2,3]$ & 2, 3, 4 \\
&  $(96, 227)$ & $[0;2,2,2,3]$ & 3, 3, 3 \\ \hline
10 &  $(36, 10)$ & $[0;2,2,3,6]$ & 2, 2, 2, 4 \\
&  $(36, 13)$ & $[0;2,2,3,6]$ &  2, 2, 2, 2, 2 \\
&  $(36, 10)$ & $[0;2,2,2,2,2]$ &  1, 1, 2, 2, 4 \\
&  $(36, 13)$ & $[0;2,2,2,2,2]$ & 1, 1, 2, 2, 2, 2  \\
&  $(48, 29)$ & $[0;2,2,2,8]$ & 1, 2, 3, 4\\
&  $(72, 15)$ & $[0;2,2,2,4]$ &  1, 3, 6  \\
&  $(72, 40)$ & $[0;2,2,2,4]$ &  2, 4, 4\\
&  $(72, 43)$ & $[0;2,2,2,4]$ &  1, 3, 6 \\
&  $(108, 17)$ & $[0;2,2,2,3]$ &  2, 2, 6 \\
&  $(108, 40)$ & $[0;2,2,2,3]$ & 2, 2, 2, 4 \\ \hline
\end{longtable}
\end{center}
\normalsize


\begin{thebibliography}{99}


\bibitem[Akbas and Singerman 90]{modular2}
  { M. Akbas and D. Singerman.},
`The normalizer of {$\Gamma_0(N)$} in {${\rm PSL}(2,{\bf  R})$}', {\em Glasg. Math. J.},
  {32}, no 3. (1990) 317--327. 
	


\bibitem[Behn et al. 13]{brr} {A. Behn, R. E. Rodr\'iguez, \and A.M. Rojas}, 
`Adapted Hyperbolic Polygons and Symplectic Representations for
group actions on Riemann surfaces',
{\em J. Pure Appl. Algebra} { 217} (2013) 409--426.
http://www.geometry.uchile.cl

\bibitem[Behn et al. 15]{bmr} {A. Behn, C. Mu\~noz, \and A.M. Rojas}, 
`Classification of topologically non-equivalent actions using generating vectors, a SAGE package', (2015) {\it preprint.}
http://www.geometry.uchile.cl


\bibitem[Birkenhake and Lange 04]{lb}
{Ch. Birkenhake \and H. Lange},
`Complex Abelian Varieties', $2^{nd}$ edition,
{\em Grundlehren Math. Wiss.} 302, Springer, 2004.


\bibitem[Bosma et al. 97]{magma} {W. Bosma, J. Cannon, \and C. Playoust},
`The Magma algebra system. I. The user language'
{\em J. Symbolic Comput.} 24 (1997) 235--265. http://magma.maths.usyd.edu.au

\bibitem[Breuer 00]{breuer}
{T. Breuer},
`Characters and automorphism groups of compact {R}iemann  surfaces',
{\em London Math. Soc. Lecture Note Ser.},  v. 280. Cambridge University Press, Cambridge, 2000.

\bibitem[Broughton 90]{brou}
{S. A. Broughton},
`Classifying finite group actions on surfaces of low genus',
{\em J. Pure Appl. Algebra} 69 (1990), 233--270.


\bibitem[Carocca and Rodr{\'{\i}}guez 06]{cr}
{A. Carocca \and R. E. Rodr{\'{\i}}guez},
`Jacobians with group actions and rational idempotents',
{\em J. Algebra}, 306 (2006) 322--343.

\bibitem[Carocca et al. 14]{crr}
{A. Carocca, R. E. Rodr\'iguez \and A. M. Rojas},
`Symmetric group actions on Jacobian varieties',
{\em Contemp. Math.} 629 (2014) 43--57.

\bibitem[Conder 10]{conder}
{M. Conder}, `Group actions on surfaces', (2010)
https://www.math.auckland.ac.nz/$\sim$conder/

\bibitem[Conder 14]{condergpactions} 
  {M. D. E. Conder}, `Large group actions on surfaces',
 {In: Riemann and {K}lein surfaces, automorphisms, symmetries and
              moduli spaces} {\em Contemp. Math. 629}, Amer. Math. Soc., Providence, RI, (2014) 77--97.
		

\bibitem[Earle 06]{earle} {C. Earle}, `The genus two Jacobians that are isomorphic to a product of elliptic curves', {\em Contemp. Math.} 397  (2006) 27--36.


\bibitem[Ekedahl and Serre 93]{ekserre} {T. Ekedahl  \and J.-P. Serre},
`Exemples de courbes alg\'ebriques \`a jacobienne
              compl\`etement d\'ecomposable',
{\em C. R. Math. Acad. Sci. Paris S\'er. I } 317, vol. 5 (1993) 509--513.

\bibitem[Farkas and Kra 92]{fk}{H. M. Farkas and I. Kra},
 `Riemann surfaces', {\it Grad. Texts in Math.} 71, 2nd Ed. Springer-Verlag, New York, 1992.


\bibitem[Frediani et al. 15]{paola} 
{P. Frediani, A. Ghigi, \and M. Penegini}, 
`Shimura varieties in the {T}orelli locus via {G}alois coverings',
{\em Int. Math. Res. Not.} 20  (2015) 10595-10623.


\bibitem[Harvey 71]{harvey}
{W. J. Harvey},
`On Branch Loci in Teichm\"uller space',
{\em Trans. Amer. Math. Soc.}  153 (1971) 387--399.


\bibitem[Kani 94]{kaniE} 
 {E. Kani}, `Elliptic curves on abelian surfaces', {\em Manuscripta Math.} 84 (1994) 199--223.


\bibitem[Kenku and Momose 88]{modular1}
{M.A. Kenku and  F. Momose},
`Automorphism groups of the modular curves {$X_0(N)$}', {\em Compos. Math.} 65, no. 1 (1988)  51--80. 
		

\bibitem[Lange and Recillas 04]{lr}
{H. Lange \and S. Recillas},
`Abelian varieties with group actions',
{\em J. Reine Angew. Math.} 575 (2004) 135--155.

\bibitem[Lange and Rojas 12]{lro} {H. Lange \and A.M. Rojas},
`Polarizations of isotypical components of Jacobians with group action',
{\em Arch. Math.} 98 (2012) 513--526.


\bibitem[Magaard et al. 09]{msv}    
{K. Magaard, T. Shaska, \and H. V\"olklein}, 
`Genus 2 curves that admit a degree 5 map to an elliptic curve',
{\em Forum Math.}, no. 3 (2009) 547--566.

\bibitem[Moonen and Oort 11]{mo}
{B. Moonen \and F. Oort}, 
`The Torelli locus and special subvarieties',
{\em Handbook of Moduli}, 2 (2011) 549--594.


\bibitem[Mu\~noz 14]{camila}
{C. Mu\~noz},
`Clasificaci\'on topol\'ogica de acciones de grupos en Superficies de Riemann. Un m\'etodo algebraico'.
Master Thesis (2014) Universidad de Chile.
https://sites.google.com/a/u.uchile.cl/polygons/

\bibitem[Nakajima 07]{nakajima}
{R. Nakajima},
`On splitting of certain Jacobian varieties', {\em J. Math. Kyoto Univ.} 47, No. 2 (2007) 391--415.

\bibitem[Paulhus 15]{branching} {J. Paulhus}, 
`Branching data for curves up to genus 48'. arXiv:1512.07657 [math.AG] (2015) http://arxiv.org/abs/1512.07657.




\bibitem[Paulhus and Rojas 16]{webdata} {J. Paulhus and A.M. Rojas}, 
`Completely Decomposable Jacobian Variety Data'.
{\em online} http://www.math.grinnell.edu/$\sim$paulhusj/completedec.html
or http://www.geometry.uchile.cl

\bibitem[Paulhus 08]{paulhus} {J. Paulhus}, 
`Decomposing Jacobians of curves with extra automorphisms',
{\em Acta Arith.} 132 (2008) 231--244.

\bibitem[Paulhus 07]{paulhusthesis} {J. Paulhus},
 `Elliptic factors in Jacobians of low genus curves', Ph.D. thesis, University
of Illinois at Urbana-Champaign, 2007.


\bibitem[Stein et al.]{sage}
{W. A. Stein et al.} Sage Mathematics Software (Version 6.7),
   The Sage Development Team, http://www.sagemath.org.

\bibitem[V\"olklein 96]{vbook}
{H. V\"olklein},
`Groups as Galois groups',
{\em Cambridge Stud. Adv. Math.} 53.
Cambridge University Press, 1996.    

\bibitem[Yamauchi 07]{yama} {T. Yamauchi}, 
`On $\mathbb{Q}-$simple factors of Jacobian varieties of modular curves',
{\em  Yokohama Math. J.} 53 2 (2007) 149--160.


\end{thebibliography}
\end{document}